\def\update{13/11/2012}
\theoremstyle{plain}
\newtheorem{proposition}{Proposition}[section]
\theoremstyle{remark}
\numberwithin{equation}{section}
\newtheorem{theorem}{Theorem}[section]
\def\atop#1#2{
\genfrac{}{}{0pt} {}
{#1}
{#2}}
\begin{document}

\title{Matrices of finite abelian groups,  \\
Finite Fourier Transform and codes}
\author{S. Kanemitsu and M. Waldschmidt}
\date{\update}

\begin{abstract}
Finite (or Discrete) Fourier Transforms (FFT) are essential tools in engineering disciplines based on signal transmission, which is the case in most of them. FFT are related with circulant matrices, which can be viewed as group matrices of cyclic groups.
In this regard, we introduce a generalization of the previous investigations to the case of finite groups, abelian or not. We make clear the points which were not recognized as underlying algebraic structures. Especially, all that appears in the FFT in engineering has been elucidated from the point of view of linear representations of finite groups. We include many worked-out examples for the readers in engineering disciplines.
\end{abstract}

\maketitle

\section{The matrix of a finite abelian group }\label{S:MatrixFiniteAbelianGroup}

\subsection{Matrix of a finite group}

Let $G$ be a finite group of order $n$, and let $F$ be a field of characteristic not dividing $n$. This setting will be used throughout;  we also assume that $F$ contains a primitive $n$-th root
$\zeta=\zeta_n$ of $1$, but sometimes we will consider subfields of $F$ which do not satisfy this condition. We use the symbols $j=\zeta_3=e^{2\pi i/3}$ and $i=\zeta_4=e^{2\pi i/4}$ to mean a primitive cube and fourth root of unity, the latter expressions valid in characteristic $0$.

    Let $\underline{X}:=(X_\sigma)_{\sigma\in G}$ be an $n$-tuple of variables indexed by the elements of $G$. The {\it group matrix}\index{group matrix}
$$
A_G:=
\bigl(X_{\tau^{-1}\sigma}
\bigr)_{\tau,\sigma\in G} \in {\mathrm {Mat}}_{n\times n}\bigl( F[\underline{X}] \bigr),
$$
(depending on a labeling of elements of $G$) has been introduced by Dedekind in the course of his investigation on normal bases for Galois extensions. In 1886, Frobenius gave a complete factorization of the determinant of $A_G$ into irreducible factors in $F[\underline{X}]$ -- this was the start of the theory of linear representations and characters of finite groups.

\subsection{Matrix of a finite abelian group }

We will consider the general case of a finite group in \S$\ref{S:MatrixFiniteGroup}$; here we assume the group $G$ to be  abelian and we take $F={\mathbf C}$. Let $\widehat{G}$  be the dual of $G$, which is the group ${\mathrm {Hom}}(G,{\mathbf C}^\times)$ of characters of $G$. We will consider $n$--tuples of complex numbers; when they are indexed by the elements of $G$, we say that they are in ${\mathbf C}^G$; when they are indexed by the elements of $\widehat{G}$, we say that they are in ${\mathbf C}^{\widehat{G}}$.
\bigskip

For each $\chi\in  \widehat{G}$, the vector
$$
 \bigl(
\chi(\sigma)
\bigr)_{\sigma\in G}\in{\mathbf C}^G
$$
is an eigenvector of the matrix  $A_G$ belonging to the eigenvalue given by the linear form
\begin{equation}\label{eigenvalue1}
Y_\chi:=\sum_{\sigma\in G} \chi(\sigma) X_\sigma.
\end{equation}
This follows from the relation, for $\chi\in\widehat{G}$,
$$
\sum_{\sigma\in G} \chi(\sigma) X_{\tau^{-1}\sigma}  =  \chi (\tau) \sum_{\sigma\in G} \chi (\sigma) X_\sigma.
$$
Therefore the $n\times n$ matrix
\begin{equation}\label{Equation:P}
P:= \bigl(
\chi(\sigma)
\bigr)_{\sigma \in G, \chi\in \widehat{G}} \in {\mathrm {Mat}}_{n\times n} ( {\mathbf C})
\end{equation}
is regular and
\begin{equation}\label{Equation:AGP=PD}
A_GP=PD,
\end{equation}
where $D$ is the diagonal $n\times n$ matrix
$$
D:={\mathrm {Diag}}(Y_\chi)_{\chi\in \widehat{G}}:= \bigl(
Y_\chi \delta_{\chi, \psi}
\bigr)_{ \chi, \psi  \in \widehat{G}}.
$$
We have used Kronecker's symbol
$$
\delta_{\chi, \psi}=\begin{cases}
1& \hbox{if $\chi = \psi$},
\\
0& \hbox{if $\chi\not=\psi$}.
\end{cases}
$$
In particular the determinant of $A_G$, called  {\it  determinant of the group }\index{group determinant} $G$  ({\it Gruppendeterminant} in German -- see the historical note of \cite{Bki}), is
\begin{equation}\label{Gruppendeterminant}
\det A_G=\prod_{\chi\in \widehat{G}} Y_\chi= \prod_{\chi\in \widehat{G}}\sum_{\sigma\in G} \chi(\sigma)   X_\sigma.
\end{equation}
This formula is used by Hasse \cite{H} to give an explicit formula for the number of ideal classes of an algebraic number field (see also \cite{Yam}). It is also useful for computing the $p$--adic rank of the units of an algebraic number field \cite{Ax}.

\bigskip

The dual $\widehat{\widehat{G}}$ of $\widehat{G}$, which is called the {\it bidual}\index{bidual} of $G$, is canonically isomorphic to $G$, the characters of $\widehat{G}$ being given by $\chi\mapsto \chi(\sigma)$ for $\sigma\in G$. Denoting by ${\mathbf U}_n$ the group of $n$--th roots of unity in ${\mathbf C}$, namely the set of complex roots of the polynomial $X^n-1$, the pairing 
$$
\begin{matrix}
G\times  \widehat{G}&\longrightarrow & {\mathbf U}_n
\\
(\sigma,\chi)&\longmapsto&\chi(\sigma)\\
\end{matrix}
$$
is non--degenerate. 
\bigskip

In parallel to the case of the dual of $G$, we introduce their counterparts. Correspondence can be seen in the table below.

Let $\underline{T}:=(T_\chi)_{\chi\in \widehat{G}}$ be an $n$-tuple of variables indexed by $\widehat{G}$.
The matrix $A_{\widehat{G}}\in {\mathbf C}[\underline{X}] $ of the dual of $G$ is: 
$$
A_{\widehat{G}} =
\bigl(T_{\psi^{-1}\chi}
\bigr)_{\psi,\chi\in G} \in {\mathrm {Mat}}_{n\times n}\bigl( {\mathbf C}[\underline{X}] \bigr).
$$
For each $\sigma\in G$, the vector
$$
 \bigl(
\chi(\sigma)
\bigr)_{\chi \in \widehat{G}}\in{\mathbf C}^{\widehat{G}}
$$
is an eigenvector of the matrix  $A_{\widehat{G}}$ belonging to the eigenvalue given by the linear form
\begin{equation}\tag{$\ref{eigenvalue1}$'}
U_\sigma:=\sum_{\chi\in \widehat{G}} \chi(\sigma) T_\chi.
\end{equation}
This follows from the relation, for $\sigma\in G$,
$$
\sum_{\chi\in \widehat{G}} \chi(\sigma) T_{\psi^{-1}\chi}  =  \psi(\sigma) \sum_{\chi\in \widehat{G} } \chi (\sigma) T_\chi.
$$
Therefore the transpose ${^t}\! P$ of the matrix $P$ given by ($\ref{Equation:P}$), namely
\begin{equation}\tag{$\ref{Equation:P}$'}
{^t}\! P:= \bigl(
\chi(\sigma)
\bigr)_{\chi\in \widehat{G}, \sigma \in G} \in {\mathrm {Mat}}_{n\times n} ( {\mathbf C})
\end{equation}
satisfies
\begin{equation}\tag{$\ref{Equation:AGP=PD}$'}
A_{\widehat{G}} {^t}\! P=^{t}\! \! \!  P \widehat{D},
\end{equation}
where $\widehat{D}$ is the diagonal $n\times n$ matrix
$$
\widehat{D}:={\mathrm {Diag}}(U_\sigma)_{\sigma\in \widehat{G}}.
$$
\bigskip

\begin{center}
Table. Correspondence between $G$ and $\widehat{G}$
\end{center}
\begin{center}
\begin{tabular}{|r|l|c|c|c|} \hline
group  & $n$-tuples  & eigenvalues & vectors   \\ \hline
 $G$ & $X_{\sigma}$ & $Y_{\chi}$ & ${\chi(\sigma)}_{\sigma \in G}$  \\ \hline
$\widehat{G}$ & $T_{\chi}$ & $U_{\sigma}$ & ${\chi(\sigma)}_{\chi \in \widehat{G}}$  \\ \hline
\end{tabular}
\end{center}

\subsection{Matrix of a cyclic group}

We consider here the special case where the group $G$ is the cyclic group $C_n$ of order $n$. Let   $\sigma_1$ be a generator of $G$ and  $\chi_1$  a generator of the cyclic group $\widehat{G}$. Then the number $\zeta=\chi_1(\sigma_1)$ is a primitive $n$--th root of unity which we have assumed to belong to $F$. We have
$G=\{1,\sigma_1,\dots,\sigma_1^{n-1}\}$,   $\widehat{G}=\{1,\chi_1,\dots,\chi_1^{n-1}\}$ and we set  $X_i=X_{\sigma_1^i}$ and  $Y_{\ell}=Y_{\sigma_1^\ell}$. Then
$$
A_{C_n}=
\begin{pmatrix}
X_0& X_1&X_2&\cdots &X_{n-1}
\\
X_{n-1}& X_0&X_1&\cdots &X_{n-2}
\\
\vdots&\vdots&\vdots&\ddots&\vdots
\\
X_{2}& X_3&X_4&\cdots &X_{1}
\\
X_{1}& X_2&X_3&\cdots &X_{0}
\\
\end{pmatrix}
$$
is a circulant (see \cite{Dav}) and may be expressed as $\sum_{\ell=0}^{n-1}X_{\ell}K^{\ell}$ (cf. \S 1.4), where $K$ is the $n\times n$ matrix which is the specialization of $A_{C_n}$ at 
$$
(X_0,X_1,X_2,\dots,X_{n-1})=(0,1,0,\dots,0).
$$

Since $\chi_i(\sigma_1^{\ell})=\zeta^{i\ell}$, \eqref{eigenvalue1} reads
$$
Y_{\ell}=\sum_{i=0}^{n-1} \zeta^{i\ell}X_i.
$$
The matrix $P$ is
$$
P= \bigl(\zeta^{ij}\bigr)_{0\le j,j\le n-1}=
\begin{pmatrix}
1& 1 & 1 &\cdots &1
\\
1& \zeta &\zeta^2 &\cdots &\zeta^{n-1}
\\
1&\zeta^2&\zeta^4&\cdots &\zeta^{2(n-1)}
\\
\vdots&\vdots&\vdots&\ddots&\vdots
\\
1&\zeta^{n-1} & \zeta^{2(n-1)}&\cdots &\zeta^{(n-1)(n-1)}
\\
\end{pmatrix},
$$
where the exponent of $\zeta$ is given by the multiplication table of the ring ${\mathbf Z}/n{\mathbf Z}$. The determinant $\Delta_n$ of $P$ is considered by Massey in \cite{Mas}. It is a Vandermonde determinant, with its value
$$
\Delta_n=\prod_{\ell=1}^{n-1}\prod_{i=0}^{\ell-1} \zeta^i (\zeta^{\ell-i}-1).
$$
For instance
$$
\Delta_1=1,\quad
\Delta_2=
\det\begin{pmatrix}
1&1
\\
1&-1
\end{pmatrix}=-2,
$$
$$
\Delta_3=
\det\begin{pmatrix}
1&1&1
\\
1& j & j^2
\\
1& j^2& j
\end{pmatrix}=
3j(j-1),
$$
$$
\Delta_4=\det\begin{pmatrix}
1&1&1&1
\\
1&i&-1&-i
\\
1&-1&1&-1
\\
1&-i&-1&i
\end{pmatrix}=
16i,
$$
where $j=\zeta_3$ and $i=\zeta_4$ are the primitive third root and fourth root of $1$, respectively.
 In general, the sum of the $n$ rows is
 $$
 (n,0,0,\dots 0),
 $$
 and the determinant of $P$ is $n$ times the determinant $\Delta'_n$  of the $(n-1)\times (n-1)$ matrix $\bigl(\zeta^{ij}\bigr)_{1\le i, j \le n-1}$. If $n$ is prime, after a suitable permutation of the rows, one can write $\Delta'_n$ as a circulant determinant with first row $(\zeta, \zeta^2,\dots,\zeta^{n-1})$.

 There are various subfields of $F$ over which one can decompose the group determinant into a product of irreducible factors.

 Firstly, over $F$ itself (which contains the $n$-th roots of unity), the decomposition is given by ($\ref{Gruppendeterminant}$).

Secondly, in characteristic zero, over $\mathbf Q$, the polynomial $X^n-1$ splits as
\begin{equation}\label{Equation:cyclotomie}
X^n-1=\prod_{d\mid n} \Phi_d(X),
\end{equation}
where $\Phi_d$ is the {\it cyclotomic polynomial}\index{cyclotomic polynomial} of index $d$, which is an irreducible polynomial in ${\mathbf Z}[X]$  of  degree $\varphi(d)$. Let $\zeta_d$ be a root of $\Phi_d$ (i.e. a primitive $d$--th root of unity). Then it generates
 the $d$-th {\it cyclotomic field}\index{cyclotomic field} over ${\mathbf Q}$
 \begin{equation}\label{cyclotomicfield1}
\Gamma_d:={\mathbf Q}[X]/(\Phi_d(X))={\mathbf Q}(\zeta_d).
\end{equation}

Accordingly, the group determinant  ($\ref{Gruppendeterminant}$) of a cyclic group of order $n$  splits into a product of irreducible polynomials in ${\mathbf Q}[\underline{X}]$
$$
\det A_G=\prod_{d\mid n}  \psi_d(\underline{X}),
$$
where the homogeneous polynomial $\psi_d\in{\mathbf Q}[\underline{X}]$ is given by the norm ${\mathrm {N}}_{\Gamma_d/{\mathbf Q}}$ of $\Gamma_d$

$$
\psi_d(\underline{X})={\mathrm {N}}_{\Gamma_d/{\mathbf Q}}(X_0+\zeta_d X_1+\cdots+\zeta_d^{n-1}X^{n-1}).
$$
For instance, for $n=3$, the determinant of the cyclic group $C_3$ of order $3$ is
$$
\det A_{C_3}=\left|
\begin{matrix}
X_0&X_1&X_2
\\
X_2&X_0&X_1
\\
X_1&X_2&X_0
\end{matrix}
\right| = X_0^3+X_1^3+X_2^3-3X_0X_1X_2,
$$
over ${\mathbf C}$ the decomposition  ($\ref{Gruppendeterminant}$) is
 \begin{equation}\label{Equation:troissurC}
 (X_0+X_1+X_2)(X_0+jX_1+j^2X_2)(X_0+j^2X_1+jX_2),
 \end{equation}
 while over ${\mathbf Q}$ the decomposition is
 \begin{equation}\label{Equation:troissurQ}
 (X_0+X_1+X_2)(X_0^2+X_1^2+X_2^2- X_0X_1-X_1X_2-X_2X_0),
\end{equation}
 where the second factor is 
 $$
 {\mathrm {N}}_{{\mathbf Q}(j)/{\mathbf Q}}(X_0+jX_1+j^2X_2).
 $$

Thirdly, in finite characteristic,  over a finite field ${\mathbf F}_q$ with $q$ elements (and $\gcd(q,n)=1$), the decomposition of the  determinant  of the cyclic group $C_n$ is given by the decomposition of the cyclotomic polynomials $\Phi_d$, with $d$ ranging over the set of divisors of $n$, over ${\mathbf F}_q$. For such a $d$, let $r$ be the order of $q$ in the multiplicative group ${({\mathbf Z}/d{\mathbf Z})}^\times$. Then $\Phi_d$ splits in ${\mathbf F}_q[X]$ into $\varphi(d)/r$ polynomials, all of the same degree $r$. If $H$ is the subgroup generated by the class $q$ modulo $d$ in ${({\mathbf Z}/d{\mathbf Z})}^\times$, the choice of a primitive $d$-th root of unity $\zeta_d$ gives rise to an irreducible factor
$$
P_H(X)=\prod_{h\in H} (X-\zeta_d^h),
$$
and all factors of $\Phi_d$ are obtained by taking the $\varphi(d)/r$ classes of ${({\mathbf Z}/d{\mathbf Z})}^\times$ modulo $H$; for any $m\in  {({\mathbf Z}/d{\mathbf Z})}^\times$, set
$$
P_{mH}(X)=\prod_{h\in H} (X-\zeta_d^{mh}).
$$
Then the decomposition of $\Phi_d$ into irreducible factors over ${\mathbf F}_q$ is
$$
\Phi_d(X)=\prod_{mH\in {({\mathbf Z}/d{\mathbf Z})}^\times/H} P_{mH}(X).
$$
Here is another description of the decomposition of the polynomial $X^n-1$ into irreducible factors over ${\mathbf F}_q$.
The $2^n$ factors of the polynomial $X^n-1$ over a field containing a primitive $n$--th root of unity $\zeta$ are
$$
Q_L(X)=\prod_{\ell \in L} (X-\zeta^\ell), \quad (L\subseteq \{1,\dots,n\})
$$
(with $Q_{\emptyset}=1$, as usual),
and such a polynomial belongs to ${\mathbf F}_q[X]$ if and only if $Q_L(X)^q=Q_L(X^q)$. This condition is satisfied if and only if the label set $L$ of $ \{1,\dots,n\}$ is stable under multiplication by $q$ in ${\mathbf Z}/n{\mathbf Z}$. Hence the irreducible factors of $X^n-1$ over ${\mathbf F}_q$ are the $Q_L$ with $L$ stable under multiplication by $q$ and minimal for this property.

Once we know the decomposition of $X^n-1$, we deduce the decomposition of the determinant of the cyclic group $G$ of order $n$.

\noindent
{\tt Example}.
Consider the cyclic group $C_3$ of order $3$, assuming that the characteristic is not $3$.  For $q\equiv 1 \mod 3$, the polynomial $X^3-1$ has the decomposition ($\ref{Equation:troissurC}$) with three homogeneous linear factors (because  ${\mathbf F}_q$ contains the primitive cubic roots of $1$), while for $q\equiv 2 \mod 3$, the polynomial $X^3-1$ has the decomposition ($\ref{Equation:troissurQ}$) with one homogeneous linear factor and one irreducible factor of degree $2$ (because $X^2+X+1$ is irreducible over  ${\mathbf F}_q$).

\subsection{The group ring of a cyclic group and the algebra of circulants}
Recall that $F$ is a field whose characteristic does not divide $n$. Let $K$ denote the $n\times n$  circulant matrix with its first row $(0,1,0,\dots,0)$ (often referred to as the {\it shift-forward matrix}\index{shift-forward matrix}), where a {\it circulant matrix}\index{circulant matrix} is one whose rows consists of the $n$  cycles 
$$
(c_0,\dots,c_{n-1}), \; (c_{n-1},c_0,\dots,c_{n-2}), \;  \dots ,
$$
which therefore
can be written as 
$$
\begin{pmatrix}
c_0& c_1&c_2&\cdots &c_{n-1}
\\
c_{n-1}& c_0&c_1&\cdots &c_{n-2}
\\
\vdots&\vdots&\vdots&\ddots&\vdots
\\
c_{2}& c_3&c_4&\cdots &c_{1}
\\
c_{1}& c_2&c_3&\cdots &c_{0}
\\
\end{pmatrix}
=
c_0I+c_1K+\cdots+c_{n-1}K^{n-1}, 
$$
so that the algebra of circulant $n\times n$ matrices is nothing other than $F[K]$, which we denote by $R$ subsequently. Further, the minimal polynomial of $K$ is $T^n - 1$. Hence $F[K]$ is isomorphic to $ F[T]/(T^n-1)$.

If $C_n$ denotes a cyclic group of order $n$, then the algebra $F[C_n]$, called the {\it group ring}\index{group ring} of $C_n$ over $F$,  is also isomorphic to $ F[T]/(T^n-1)$. Altogether,

\begin{equation}
R=F[K]\simeq F[T]/(T^n-1) \simeq F[C_n].
\end{equation}

Assume $F$ contains a primitive $n$--th root $\zeta$ of unity. We split the polynomial $T^n-1$ into irreducible factors over  $F$, say
$$
T^n-1=\prod_{\ell=0}^{n-1} (T-\zeta^\ell).
$$
Then the algebra $F[C_n]$ splits accordingly into a product of $n$ algebras, all isomorphic to $F$:
\begin{equation}\label{equation:isomorphisme}
F [C_n]\simeq  \prod_{\ell=0}^{n-1} F[T]/(T-\zeta^\ell).
\end{equation}
For $0\le \ell\le n-1$, denote by $R_\ell$ the subset of $ R$ which is  the image of the factor $ F[T]/(T-\zeta^\ell)$ ($0\le \ell\le n-1)$  on the right hand side of ($\ref{equation:isomorphisme}$). Then $R_\ell$   is a simple $F$--  algebra  and $R=R_0\times\cdots\times R_{n-1}$. Let $E_\ell$ be the unity element of $R_\ell$. Then we have the decomposition into orthogonal idempotents
$$
R_\ell=R E_\ell, \quad
 1=E_0+\cdots+E_{n-1}
 \quad \hbox{and}\quad E_iE_\ell=\delta_{i\ell}
 \quad (0\le i, \ell\le n-1).
 $$
For  the structure theorem of semi--simple rings, see for instance  \cite{La}, Th.~4.4 in Chap.~XVII or  \cite{Bki}.
The special case  of the algebra  $R=F[K]$ of circulants  of order $n$  is worked out in \cite{Wil}:  the solution is
\begin{equation}\label{Equation:Wilkie}
E_h=\frac{1}{n} \sum_{\ell=0}^{n-1} \zeta^{-h\ell} K^\ell, \quad(0\le h\le n-1).
\end{equation}
In the other direction we have
$$
K^h=\sum_{\ell=0}^{n-1} \zeta^{h\ell} E_\ell, \quad(0\le h\le n-1).
$$

These formulae are easy to check, but it is interesting to explain where they come from. The isomorphism ($\ref{equation:isomorphisme}$) from $F[G]$ to the product of the algebras $F[T]/(T-\zeta^\ell)$ maps the class modulo $X^n-1$ of a polynomial $P$ to the $n$--tuple $(P(\zeta^\ell))_{0\le \ell\le n-1}$. We want to explicitly write down the inverse isomorphism. Given an $n$--tuple $(b_\ell)_{0\le \ell\le n-1}$, one deduces from the Chinese Remainder Theorem that there is a unique polynomial $P$ of degree $\le n-1$ such that $P(\zeta^h)=b_h$ for $0\le h \le n-1$. To write down the solution $P$ amounts to solving the associated interpolation problem, which is done by classical interpolation formulae. In this specific case, they lead us to introducing the polynomial
$$
P_0(X)=\frac{1}{n}\cdot \frac{X^n-1}{X-1}=
\frac{1}{n}(X^{n-1}+\cdots+X+1).
$$
It satisfies $P_0(1)=1$ and $P_1(\eta)=0$ for any $n$--th root of unity $\eta$ not equal to $1$. Hence for $0\le h\le n-1$,   the polynomial $P_h(X):=P_0(X/\zeta^\ell)$, which is
$$
P_h(X)=\frac{1}{n} \sum_{\ell=0}^{n-1} \zeta^{-h\ell} X^\ell,
$$
satisfies
$$
P_h(\zeta^\ell)=\delta_{h,\ell} \quad \hbox{for $0\le  h,\ell\le n-1$.}
$$
This is how ($\ref{Equation:Wilkie}$) arises: $E_h=P_h(K)$.
Also, the solution of the interpolation problem is therefore the following: the polynomial
$$
P(X)=\sum_{h=0}^{n-1} b_h P_h(X)
$$
satisfies $P(\zeta_h)=b_h$ for  $0\le h \le n-1$.

In characteristic $0$, there is another basis for the circulant algebra, which is rational over ${\mathbf Q}$.  Let $n$ be a positive integer. We consider the   decomposition, into a product of simple algebras over ${\mathbf Q}$, of the semi--simple algebra ${\mathbf Q}[X]/(X^n-1)$
associated with the decomposition ($\ref{Equation:cyclotomie}$) of the polynomial $X^n-1$ into irreducible factors over ${\mathbf Q}$:
$$
{\mathbf Q}[X]/(X^n-1)=\prod_{d\mid n}  \Gamma_d,
$$
where $\Gamma_d$ is the $d$--th cyclotomic field defined by \eqref{cyclotomicfield1}.
For each  divisor $d$ of $n$, define
$$
\Psi_{n,d}(X)=\frac{X^n-1}{\Phi_d(X)}=\prod_{\atop{d'\mid n}{ d'\neq d}} \Phi_{d'}(X).
$$
Since $\Phi_d$ and $\Psi_{n,d}$ are relatively prime,   there is a unique polynomial $\widetilde{\Psi}_{n,d}$ of degree $\le \varphi(d)-1$ which is the inverse of  $\Psi_{n,d}$ modulo $\Phi_d$:
$$
\widetilde{\Psi}_{n,d}\Psi_{n,d}\equiv 1 \mod {\Phi_d}.
$$
Then  a basis of the ${\mathbf Q}$--algebra ${\mathbf Q}[C_n]$ is given by
\begin{equation}\label{Equation:KofGcyclic}
\bigl\{E_{d,j} \; \mid \; 0\le j\le \varphi(d)-1,\; d\mid n\bigr\},
\end{equation}
where
$$
 E_{d,j}  \equiv  X^j \widetilde{\Psi}_{n,d}(X) \Psi_{n,d} \mod  (X^n-1).
$$

As an example, consider the case where  $n=p$ is a prime. We have
$$
\Psi_{p,1}=\frac{X^p-1}{X-1}=\Phi_p,\quad
\Psi_{p,p}=\frac{X^p-1}{\Phi_p}=X-1=\Phi_1,
$$
hence $ \widetilde{\Psi}_{p,1}=1/p$. To compute $ \widetilde{\Psi}_{p,p}$, we  start by taking the derivative of
$X^p-1=(X-1)\Phi_p$:
$$
pX^{p-1} =\Phi_p(X)+(X-1)\Phi'_p(X).
$$
Hence the polynomial
$$
\widetilde{\Psi}_{p,p}:=\frac{1}{p} \Phi'_p-  \frac{X^{p-1}-1}{X-1}
$$
satisfies
$$
(X-1)\widetilde{\Psi}_{p,p} =1-\frac{1}{p} \Phi_p.
$$
Therefore a basis of the circulant algebra with $n=p$  is  given by
$((1/p)\Phi_p, F_0, F_1 ,\ldots, F_{p-2})$, with
$$
F_j\equiv   X^{j} (X-1)   \widetilde{\Psi}_{p,1}(X)  \mod (X^p-1)
\quad(0\le j\le p-2).
$$

For instance, when $p=3$, we have
$$
 \widetilde{\Psi}_{3,1}(X)=-\frac{1}{3}(X+2), \quad
 (X-1)\widetilde{\Psi}_{3,1}(X) = -\frac{1}{3}(X+2)(X-1),
 $$
 $$
 (X-1)X\widetilde{\Psi}_{3,1}(X) = -\frac{1}{3}(X+2)(X-1)X\equiv  -\frac{1}{3}(X-1)^2\mod (X^3-1),
$$
and  the basis of ${\mathbf Q}[X]/(X^3-1)$ which is associated to the basis $(1,0)$, $(0,1)$, $(0,X)$ of the product
$$
{\mathbf Q}[X]/(X-1)\times{\mathbf Q}[X]/(X^2+X+1)
$$
under the natural isomorphism is given by the classes modulo $X^3-1$ of the polynomials
$$
\frac{1}{3}(X^2+X+1),\quad
-\frac {1}{3}(X^2+X-2),\quad
-\frac{1}{3}(X^2-2X+1).
$$

\noindent
{\tt Remark.}
There is no element   $J$ in the algebra ${\mathbf Q}[X]/(X^3-1)$ which satisfies $1+J+J^2=0$. This is analogous to the fact that the product algebra ${\mathbf Q}\times {\mathbf Q}[i]$ does not contain a square root of $-1$. In a product $A_1\times A_2$ of two algebras, there are in general no subalgebras isomorphic to the factors $A_1$ and $A_2$.

\subsection{The group ring $F[G]$ of a finite abelian group $G$}

We extend the results of the previous section to the  algebra $F[G]$ of a finite abelian group $G$. Here we assume that $F^\times$ contains a subgroup of order $n$, where $n$ is the order of $G$. Hence the characteristic of $F$ does not divide $n$.

According to Maschke's Theorem (see for instance \cite{La} Chap.~XVIII,  \S1, Th.~1.2; see also \cite{Se} Chap.~6, Prop.~9 for the characteristic zero case), the algebra $F[G]$ is semi--simple: it is a product of $n$ algebras isomorphic to $F$. Under such an isomorphism, the canonical basis of $F^G$ is associated with a basis $(e_\chi)_{\chi\in \widehat{G}}$ of $F[G]$  satisfying
$$
e_\chi e_\psi = \delta_{\chi,\psi} e_\chi.
$$
An explicit solution is given by
$$
e_\chi=\frac{1}{n} \sum_{\sigma \in G} \chi^{-1}(\sigma)\sigma
\quad (\chi \in \widehat{G}).
$$
This follows from the  relation of orthogonality of characters   (see \cite{Se}, Th.~3, \S2.3 and \cite{La} Chap.~XVIII, \S5,  Th.~5.1):
$$
\frac{1}{n}  \sum_{\tau\in G} \chi (\tau) \psi^{-1}(\tau ) =
\delta_{\chi,\psi}.
$$

One obtains a basis of ${\mathbf Q}[G]$ rational over ${\mathbf Q}$ by writing the group $G$ as a product of cyclic groups $C_{d_1}\times \cdots\times C_{d_k}$ of orders $d_1,\cdots,d_k$ respectively, with $d_1\mid d_2\mid \cdots \mid d_k$, where $d_1,\dots,d_k$ are the elementary divisors of the finitely generated ${\mathbf Z}$--module $G$ (for the elementary divisors theorem, see for instance  \cite{La} Chap.~III, Th.~7.8). Then each algebra ${\mathbf Q}[C_{d_i}]$ has a rational basis given by ($\ref{Equation:KofGcyclic}$), and one deduces a rational basis for the product   ${\mathbf Q}[C_{d_1}]\times \cdots \times {\mathbf Q}[C_{d_k}]={\mathbf Q}[G]$. The norm of the generic element gives the decomposition
into irreducible factors of the group determinant over ${\mathbf Q}$.

\section{Finite Fourier Transform associated with a finite abelian group}

\subsection{Generalized Finite Fourier Transform}

We keep the notation introduced in \S$\ref{S:MatrixFiniteAbelianGroup}$,
 with the field $F$, the finite abelian group $G$ with $n$ elements, with $n$ being relatively prime to the characteristic of $F$. An element $\underline{b}$ in $F^G$ is an $n$--tuple of elements of $F$ indexed by $G$, and
 an element $\underline{B}$ in $F^{\widehat{G}}$ is an $n$--tuple of elements of $F$ indexed by $\widehat{G}$.
The following proposition gives a Finite Fourier Transform Pair for $G$. For Finite Fourier Transforms on a more general finite group, cf. e.g. \cite{Aus} and \cite{Diaconis}.

\begin{proposition} 
\label{Proposition:InverseTransform}
For $\underline{b}=(b_\sigma)_{\sigma\in G}$ in $F^G$, define ${\mathcal{F}}(\underline{b})=\underline{B}=(B_\chi)_{\chi\in \widehat{G}}$ in  $F^{\widehat{G}}$ by
$$
B_\chi:= \sum_{\sigma\in G} \chi (\sigma) b_\sigma \qquad (\chi\in \widehat{G}).
$$
Then ${\mathcal{F}}$ is a bijective map from $F^G$ to $F^{\widehat{G}}$, with  inverse ${\mathcal{F}}^{-1}$ defined by ${\mathcal{F}}^{-1}(\underline{B})=\underline{b}$ with
$$
b_\sigma=
\frac{1}{n} \sum_{\chi\in \widehat{G}} \chi (\sigma^{-1} ) B_\chi  \qquad (\sigma\in G).
$$
\end{proposition}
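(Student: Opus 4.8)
The plan is to show that the composition $\mathcal{F}^{-1}\circ\mathcal{F}$ is the identity on $F^G$; since both spaces have the same finite dimension $n$ over $F$, this single computation establishes bijectivity and simultaneously identifies the inverse. First I would take an arbitrary $\underline{b}=(b_\sigma)_{\sigma\in G}$, apply $\mathcal{F}$ to obtain $B_\chi=\sum_{\sigma\in G}\chi(\sigma)b_\sigma$, and then apply the proposed inverse formula to compute, for a fixed $\tau\in G$,
$$
\frac{1}{n}\sum_{\chi\in\widehat{G}}\chi(\tau^{-1})B_\chi
=\frac{1}{n}\sum_{\chi\in\widehat{G}}\sum_{\sigma\in G}\chi(\tau^{-1})\chi(\sigma)b_\sigma
=\frac{1}{n}\sum_{\sigma\in G}b_\sigma\sum_{\chi\in\widehat{G}}\chi(\tau^{-1}\sigma),
$$
after swapping the two finite sums and using multiplicativity $\chi(\tau^{-1})\chi(\sigma)=\chi(\tau^{-1}\sigma)$.

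The crux of the argument is then the inner character sum $\sum_{\chi\in\widehat{G}}\chi(\tau^{-1}\sigma)$, which I would evaluate via the orthogonality relation already recorded in the excerpt. Setting $\rho=\tau^{-1}\sigma\in G$, the key fact is that $\sum_{\chi\in\widehat{G}}\chi(\rho)$ equals $n$ when $\rho$ is the identity of $G$ and $0$ otherwise. This is precisely the dual (or bidual) form of the orthogonality relation: the bidual $\widehat{\widehat{G}}$ is canonically isomorphic to $G$ and the pairing $G\times\widehat{G}\to\mathbf{U}_n$ is non-degenerate, so summing $\chi(\rho)$ over all $\chi$ is the same as summing a fixed nontrivial character of $\widehat{G}$ over the whole group $\widehat{G}$, which vanishes. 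Consequently the inner sum is $n\,\delta_{\sigma,\tau}$, the factor $\tfrac{1}{n}$ cancels, and only the term $\sigma=\tau$ survives, yielding $b_\tau$. Hence $\mathcal{F}^{-1}(\mathcal{F}(\underline{b}))=\underline{b}$.

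The one point deserving genuine care is justifying that $\sum_{\chi\in\widehat{G}}\chi(\rho)=n\,\delta_{\rho,1}$, since the orthogonality relation quoted in the excerpt is stated as a sum over $G$ of $\chi(\tau)\psi^{-1}(\tau)$, i.e.\ a sum over group elements against two characters, whereas here I need a sum over characters evaluated at a single group element. I would obtain it by applying that same orthogonality statement to the \emph{dual} group $\widehat{G}$: the bidual identification lets each $\rho\in G$ act as a character of $\widehat{G}$, and taking $\psi$ trivial in the orthogonality relation for $\widehat{G}$ gives exactly the needed evaluation. An equivalent and perhaps cleaner route is to invoke the matrix relation $A_G P=PD$ from \eqref{Equation:AGP=PD}: the matrix $P=(\chi(\sigma))$ is regular, and the computation above is simply the statement that $\tfrac{1}{n}\,\overline{P}^{\,t}$ (in the abelian $F=\mathbf{C}$ setting) is the inverse of $P$, which follows once the character orthogonality is in hand.

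The main obstacle, then, is not any hard estimate but the bookkeeping of which orthogonality relation is being used and in which group; once the character sum $\sum_{\chi}\chi(\rho)=n\,\delta_{\rho,1}$ is cleanly established, the remainder is the routine index manipulation sketched above, and bijectivity follows immediately since a map possessing a two-sided (or even one-sided, by dimension count) inverse on equal-dimensional finite-dimensional $F$-spaces is automatically a bijection.
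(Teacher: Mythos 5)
Your proposal is correct and follows essentially the same route as the paper: the paper's entire proof consists of quoting the dual orthogonality relation $\frac{1}{n}\sum_{\chi\in\widehat{G}}\chi(\sigma)\chi(\tau^{-1})=\delta_{\sigma,\tau}$ (cited from Serre and Lang), which is exactly the character sum $\sum_{\chi\in\widehat{G}}\chi(\tau^{-1}\sigma)=n\,\delta_{\sigma,\tau}$ at the heart of your computation. The only difference is that you derive this relation yourself, via the bidual identification and the orthogonality relation summed over group elements, and you spell out the sum-interchange and dimension-count bookkeeping that the paper leaves implicit.
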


\begin{proof}
This follows from the  relation (see \cite{Se}, Prop.~7, \S2.5 and \cite{La} Chap.~XVIII, \S5 cor. 5.6)
$$
\frac{1}{n}  \sum_{\chi\in \widehat{G}} \chi (\sigma) \chi (\tau^{-1} ) =
\delta_{\sigma,\tau}
  $$
for $\sigma$ and $\tau$ in  $G$.
\end{proof}

For $\underline{b}\in F^G$,  let $M(\underline{b})$ be the $n\times n$ matrix
$$
M(\underline{b}):=
\left(
 b_{\tau^{-1}\sigma}
\right)_{\sigma,\tau\in G}.
$$
Then
$$
P^{-1} M(\underline{b}) P=
{\mathrm {Diag}}(
 B_\chi)_{\chi\in \widehat{G}},
$$
where  $P$ is the matrix   ($\ref{Equation:P}$) and   $\underline{B}= (B_\chi)_{\chi\in \widehat{G}}={\mathcal{F}}(\underline{b})$. 

For $\underline{B}\in F^{\widehat{G}}$,  let $\widehat{M}(\underline{B})$ be the $n\times n$ matrix
$$
\widehat{M}(\underline{B}):=
\left(
 B_{\psi^{-1}\chi}
\right)_{\chi,\psi\in \widehat{G}}.
$$
Applying the inverse transform ${\mathcal{F}}^{-1}$ given by Proposition $\ref{Proposition:InverseTransform}$ with ($\ref{Equation:P}'$), we deduce
\begin{equation}\label{Equation:PmoinsunMP}
{^t}\! P^{-1} \widehat{M}(\underline{B}) {^t}\! P=
n{\mathrm {Diag}}(
 b_{\sigma^{-1}})_{\sigma\in G}.
\end{equation}

\subsection{Case of a cyclic group: Finite Fourier Transform}
In the case of a cyclic group $G$, we recover the classical Finite Fourier Transform Pair
$$
B_h:= \sum_{\ell=0}^{n-1} \zeta^{h\ell} b_\ell,
\quad
b_\ell:= \frac{1}{n} \sum_{h=0}^{n-1} \zeta^{-h\ell} B_h,
$$
where, as before,  $\zeta$ is a primitive $n$--th root of unity.

\section{Hamming weight and Generalized Finite Fourier Transform}

A theorem of Blahut  \cite{JKK} relates the Hamming weight of a vector with the rank of a matrix defined by means of the Finite Fourier Transform. We extend it by replacing a cyclic group by an arbitrary finite abelian group $G$.

\begin{theorem} \label{thmBlahut}
The Hamming weight of $\underline{b}$ is the rank of the matrix $\widehat{M}(\underline{B})$ where $\underline{B}={\mathcal{F}}(\underline{b})$.
\end{theorem}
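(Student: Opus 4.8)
The plan is to read the theorem off directly from the diagonalization already recorded in \eqref{Equation:PmoinsunMP}, so that essentially no new computation is required beyond tracking how the rank behaves under the operations involved. Recall that the Hamming weight of $\underline{b}=(b_\sigma)_{\sigma\in G}$ is the number of indices $\sigma\in G$ with $b_\sigma\neq 0$, and that \eqref{Equation:PmoinsunMP} asserts
$$
{}^t\! P^{-1}\,\widehat{M}(\underline{B})\,{}^t\! P = n\,{\mathrm{Diag}}(b_{\sigma^{-1}})_{\sigma\in G},
$$
where ${}^t\! P$ is invertible because the matrix $P$ of \eqref{Equation:P} is regular. In other words, $\widehat{M}(\underline{B})$ is similar to a diagonal matrix whose diagonal entries are the scalars $n\,b_{\sigma^{-1}}$, $\sigma\in G$.

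First I would invoke the invariance of rank under conjugation by an invertible matrix: since ${}^t\! P$ is regular, $\widehat{M}(\underline{B})$ and $n\,{\mathrm{Diag}}(b_{\sigma^{-1}})_{\sigma\in G}$ have the same rank. Next, because the characteristic of $F$ does not divide $n$, the scalar $n$ is invertible in $F$, so scaling the diagonal matrix by $n$ leaves its rank unchanged; hence $\operatorname{rank}\widehat{M}(\underline{B})=\operatorname{rank}{\mathrm{Diag}}(b_{\sigma^{-1}})_{\sigma\in G}$. The rank of a diagonal matrix equals the number of its nonzero diagonal entries, which here is the number of $\sigma\in G$ for which $b_{\sigma^{-1}}\neq 0$.

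Finally I would note that $\sigma\mapsto\sigma^{-1}$ is a bijection of $G$ onto itself, so the cardinality of $\{\sigma\in G:\,b_{\sigma^{-1}}\neq 0\}$ equals that of $\{\tau\in G:\,b_\tau\neq 0\}$, which is precisely the Hamming weight of $\underline{b}$. Combining the three equalities yields that $\operatorname{rank}\widehat{M}(\underline{B})$ equals the Hamming weight of $\underline{b}$, as claimed.

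As for the main obstacle: at the level of the statement itself there is essentially none, since the genuine content — that $\widehat{M}(\underline{B})$ is diagonalized by ${}^t\! P$ with eigenvalues $n\,b_{\sigma^{-1}}$ — is already packaged in \eqref{Equation:PmoinsunMP}, which in turn rests on the eigenvector computation for $A_{\widehat{G}}$ and on the orthogonality relations underlying the inverse transform of Proposition~\ref{Proposition:InverseTransform}. The only points demanding care are bookkeeping ones: confirming that $n\neq 0$ in $F$, which is exactly the role played by the hypothesis on the characteristic (were $n$ a zero divisor the scaling step would fail), and observing that replacing $b_{\sigma^{-1}}$ by $b_\sigma$ is harmless because inversion merely permutes the index set $G$.
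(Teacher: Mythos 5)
Your proof is correct and follows exactly the paper's route: the paper's own (one-line) proof also reads the theorem off from \eqref{Equation:PmoinsunMP}, observing that the rank of the diagonal matrix there is its number of nonzero entries. Your write-up simply makes explicit the bookkeeping the paper leaves implicit — rank invariance under conjugation, invertibility of $n$ in $F$, and the permutation $\sigma\mapsto\sigma^{-1}$ of the index set.
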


\begin{proof}
The rank of the  diagonal matrix in ($\ref{Equation:PmoinsunMP}$)  is the number of non--zero terms.
\end{proof}

\section{The matrix of a finite group }\label{S:MatrixFiniteGroup}

\subsection{An example:  ${\mathfrak S}_3$}

The symmetric group ${{\mathfrak S}}_3$ of order $6$ can be presented by generators and relations (with the unity element $e$), with the generators $\sigma$ and $\tau$ and the relations $\sigma^3=\tau^2=e$, $\tau\sigma\tau=\sigma^2$.

There exists an invertible $n\times n$ matrix $P$ such that
$$
P^{-1} A_{{\mathfrak S}_3} P=\begin{pmatrix}
L_0&0&0&0\\
0&L_1&0&0\\
0&0&M&0\\
0&0&0&M
\end{pmatrix},
$$
where $L_0$ and $L_1$ are the linear forms
$$
L_0=
X_e   + X_{\sigma} +X_{\sigma^2}  +  X_{\tau}+ X_{\tau\sigma} +  X_{\tau\sigma^2} ,
$$
$$
L_1:= X_e + X_{\sigma} +X_{\sigma^2}  -  X_{\tau} - X_{\tau\sigma} - X_{\tau\sigma^2}
$$
and $M$ is the $2\times 2$ matrix
$$
M=\begin{pmatrix}
X_e+jX_{\sigma}+j^2 X_{\sigma^2}
&
X_\tau+j^2X_{\tau\sigma}+j X_{\tau\sigma^2}
\\
X_\tau+jX_{\tau\sigma}+j^2 X_{\tau\sigma^2}
&
X_e+j^2X_{\sigma}+j X_{\sigma^2}
\end{pmatrix}.
$$
The linear forms $L_0$ and $L_1$ correspond to the representations of  ${\mathfrak S}_3$ of degree $1$, namely the trivial representation and the signature, while the matrix $M$ corresponds to the irreducible representation of degree $2$
(see \cite{Se} Chap.~5) defined by
$$
\sigma\mapsto  \begin{pmatrix}
j & 0
\\
0 & j^2
\end{pmatrix}
\quad\text{and}\quad
\tau\mapsto  \begin{pmatrix}
0&1
\\
1&0
\end{pmatrix}.
$$
Hence it also satisfies
$$
e\mapsto  \begin{pmatrix}
1 & 0
\\
0 & 1
\end{pmatrix},
\quad
\sigma^2\mapsto  \begin{pmatrix}
j ^2& 0
\\
0 & j
\end{pmatrix},
\quad
\tau\sigma\mapsto  \begin{pmatrix}
0&j^2
\\
j&0
\end{pmatrix},
\quad
\tau\sigma^2\mapsto  \begin{pmatrix}
0&j
\\
j^2&0
\end{pmatrix}.
$$
The determinant of $M$ is an irreducible polynomial in the ring
$$
{\mathbf C}[X_e, X_{\sigma}, X_{\sigma^2} ,   X_{\tau},  X_{\tau\sigma} ,   X_{\tau\sigma^2}].
$$
It can be written
$$
N(X_e, X_{\sigma}, X_{\sigma^2} )-N(X_{\tau},  X_{\tau\sigma} ,   X_{\tau\sigma^2}),
$$
where
$$
N(X_0,X_1,X_2)= {\mathrm {N}}_{{\mathbf Q}(j)/{\mathbf Q}}(X_0+jX_1+j^2X_2)
$$
(see ($\ref{Equation:troissurQ}$)). Cf. also \cite{Diaconis}.

 \subsection{The general case}

We assume again that the characteristic of the field $F$ does not divide the order $n$ of $G$ and that $F$ contains the primitive $n$--th roots of unity.  The regular representation   of $G$ has dimension $n$, its decomposition is well known (see for instance \cite{Se} Cor. 1 of Prop.~5 in \S2.4 and Chap.~5,  or \cite{La}, Chap.~VIII, \S4): each irreducible representation of $G$ is contained in the regular representation with a multiplicity equal to its degree $f$, so that the sum of the squares of  these degrees $f$ is $n$. Let $\varrho_1,\dots,\varrho_h$ be the irreducible representations and $f_1,\dots,f_h$ be their degrees. Hence there  is a basis of the space of the regular representation so that the associated matrix can be written as diagonal blocs
 $$
 {\mathrm {Diag}}(B_1,\dots,B_h),
 $$
 where, 
  for $1\le j\le h$, the matrix $B_j$ is a $f_j^2\times f_j^2$ matrix, which is a diagonal bloc of $f_j$ identical square matrices
 $$
 B_j={\mathrm {Diag}}(B_j^0,\dots,B_j^0),
 $$
 and $B_j^0$ is the $f_j\times f_j$ matrix associated with the representation $\varrho_j$. Using this change of bases and considering the generic element in the   group  ring $F[G]$,  one deduces that the matrix $A_G$ is equivalent to a matrix with the same shape, yielding a decomposition of the determinant into a product of polynomials
 $$
 \det A_G= \prod_{j=1}^h \Psi_{\varrho_j}^{f_j},
 $$
 where $\Psi_{\varrho_j}$ is a homogeneous polynomial of degree $f_j$. The fact that the representation $\varrho_j$ is irreducible implies that the polynomial $\Psi_{\varrho_j}$  is irreducible in $F[\underline{X}]$.

 \subsection{Frobenius}

 It is interesting, from a historical point of view, to look at the way Frobenius succeeded to produce the decomposition of the Gruppendeterminant into irreducible factors. The theory of linear representations of finite group was not yet fully developed: Frobenius was in the process of creating it. See  \cite{Bki}, historical note, and the references \cite{D,Fr1,Fr2}.

 Let $\varrho$ be an irreducible representation of a finite group $G$, $\chi$ its character, $f$ its degree. 
 Let us extend the map $\chi:G\mapsto{\mathbf C}$ into a 
 a function (again denoted by  $\chi$) on 
 $\bigcup_{k\ge 1} G^k$ with complex values by the induction formula, for $k\ge 1$, 
 $$
 \chi(s,s_1,\dots,s_k)=\chi(s)  \chi(s_1,\dots,s_k) -\sum_{i=1}^k  \chi(s_1,\dots,ss_i,\dots, s_k).
 $$
 For instance
 $$
 \chi(s_1,s_2)=\chi(s_1)\chi(s_2)-\chi(s_1s_2),
 $$
 $$
 \chi(e,s_1,\dots,s_k)=(f-k) \chi(s_1,\dots,s_k)
 $$
 and
 $$
  \chi(s_1,\dots,s_k) =0 \quad\text{for}\quad k>f.
  $$
    Define
  $$
  \Psi_\varrho:=(-1)^{f}
  \sum_{(s_1,\dots,s_{f})\in G^{f}  }\chi(s_1,\dots,s_{f})X_{s_1}\cdots X_{s_f} \in {\mathbf C}[\underline{X}].
  $$
  This is a homogeneous polynomial of degree $f$.

  An equivalent definition of  $\Psi_\varrho$ is the following. Let $A$ be the set of elements $(a_1,\dots,a_f)$ in ${\mathbf Z}^{f}$ satisfying
  $$
  a_j\ge 0\quad\hbox{for} \quad 1\le j\le f  \quad\hbox{and}\quad  \sum_{i=1}^f  ia_i=f.
  $$
  For $1\le k\le f$, set
  $$
  S_k=  \sum_{(s_1,\dots,s_k )\in G^{k}  }\chi(s_1\cdots s_k)X_{s_1}\cdots X_{s_k} .
  $$
    Then
  $$
  \Psi_\varrho
  =(-1)^{f}
  \sum_{(a_1,\dots,a_f) \in A} \prod_{k=1}^f \frac{S_k^{a_k} }{(-k)^{a_k} a_k!}\cdotp
  $$

  \begin{proposition}[Frobenius]
  The polynomial $  \Psi_\varrho$  is irreducible.  If $\varrho_1,\dots,\varrho_h$ are the irreducible representations of $G$ with degrees $f_1,\dots,f_h$ respectively, then
 $$
 \det A_G= \prod_{i=1}^h \Psi_{\varrho_i}^{f_i}
 $$
 is the decomposition of the polynomial $ \det A_G$ into irreducible factors in $F[\underline{X}]$.
  \end{proposition}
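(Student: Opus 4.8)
The plan is to first reinterpret $\Psi_\varrho$ as the determinant of a single $f\times f$ matrix of linear forms, and then to read off both assertions from that reinterpretation. Introduce the matrix $\varrho(\underline{X}):=\sum_{s\in G}X_s\,\varrho(s)\in\mathrm{Mat}_{f\times f}(F[\underline{X}])$. Raising it to the $k$-th power and using that $\varrho$ is multiplicative gives $\varrho(\underline{X})^k=\sum_{(s_1,\dots,s_k)}X_{s_1}\cdots X_{s_k}\,\varrho(s_1\cdots s_k)$, whose trace is exactly $S_k$. Thus the $S_k$ are the power sums $\mathrm{tr}(\varrho(\underline{X})^k)$ of the eigenvalues of $\varrho(\underline{X})$, and the second (equivalent) definition of $\Psi_\varrho$ given in the excerpt is precisely the expansion of the $f$-th elementary symmetric function of those eigenvalues in terms of power sums, i.e. the coefficient of $t^f$ in $\exp\big(\sum_{k\ge 1}(-1)^{k-1}p_k t^k/k\big)$. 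Hence $\Psi_\varrho=\det\varrho(\underline{X})$.

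Granting this identification, the factorization of $\det A_G$ is essentially a restatement of the decomposition of the regular representation recalled in \S\ref{S:MatrixFiniteGroup}. Since $A_G$ is (up to the labeling convention) the matrix of multiplication by the generic group-ring element $\sum_\sigma X_\sigma\sigma$ acting on the regular representation, the change of basis described there puts it in the block-diagonal form $\mathrm{Diag}(B_1,\dots,B_h)$ with $B_j=\mathrm{Diag}(B_j^0,\dots,B_j^0)$ ($f_j$ copies) and $B_j^0=\varrho_j(\underline{X})$. Taking determinants yields $\det A_G=\prod_j(\det\varrho_j(\underline{X}))^{f_j}=\prod_j\Psi_{\varrho_j}^{f_j}$, so that the whole content beyond the general case is the irreducibility of each factor.

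The real work, and the main obstacle, is therefore the irreducibility of $\Psi_\varrho$. I would attack it as follows. Since $F$ contains the $n$-th roots of unity it is a splitting field for $G$, so the irreducible representation $\varrho$ is absolutely irreducible and Burnside's theorem applies: the matrices $\{\varrho(s):s\in G\}$ span the whole algebra $\mathrm{Mat}_{f\times f}(F)$. Equivalently, the linear map $F^G\to\mathrm{Mat}_{f\times f}(F)$, $\underline{X}\mapsto\varrho(\underline{X})$, is surjective, which means its $f^2$ component linear forms are linearly independent. This exhibits $\Psi_\varrho=\det\varrho(\underline{X})$ as the pullback of the generic determinant $\det\big((Z_{pq})_{1\le p,q\le f}\big)$ along a surjective linear substitution $Z_{pq}=(\varrho(\underline{X}))_{pq}$.

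It then remains to combine two facts. First, the generic determinant in $f^2$ independent variables is irreducible over any field, which I would invoke as classical (or prove by cofactor expansion along one row, using that the determinant is multilinear and of degree one in each row). Second, irreducibility is preserved under a surjective linear substitution: extend the $f^2$ independent forms above to a system of coordinates on $F^G$ by adjoining $n-f^2$ further forms, so that in the new coordinates $\Psi_\varrho$ becomes the generic determinant $Q$ in $f^2$ of the variables and does not involve the remaining ones; since a polynomial irreducible in $F[Y_1,\dots,Y_{f^2}]$ stays irreducible in the larger polynomial ring obtained by adjoining extra variables (any factorization must, by a degree count in those extra variables, be free of them), $\Psi_\varrho$ is irreducible in $F[\underline{X}]$. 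The delicate point is precisely this chain from Burnside's spanning property to surjectivity of the substitution and thence to irreducibility; by contrast the determinant factorization follows directly from the material already in hand.
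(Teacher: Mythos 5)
Your proposal is correct, and it is substantially more complete than what the paper itself provides: the paper states this proposition with no proof at all, presenting it as a historical attribution to Frobenius, and the only supporting argument anywhere in the text is the sketch in \S\ref{S:MatrixFiniteGroup} where the block-diagonalization of the regular representation yields $\det A_G=\prod_j \Psi_{\varrho_j}^{f_j}$ and the irreducibility of each factor is simply asserted (``the fact that the representation $\varrho_j$ is irreducible implies that the polynomial $\Psi_{\varrho_j}$ is irreducible''). Your middle step --- the factorization via the decomposition of the regular representation into blocks $B_j^0=\varrho_j(\underline{X})$ repeated $f_j$ times --- coincides with that sketch. What you add is genuinely absent from the paper, and it is the real mathematical content. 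First, the identification $\Psi_\varrho=\det\varrho(\underline{X})$, obtained from $S_k=\mathrm{tr}\,\varrho(\underline{X})^k$ and the power-sum expansion of the $f$-th elementary symmetric function: the paper never explains why Frobenius's combinatorial polynomial equals the determinant of the corresponding block, so this link is needed for the proposition as stated to follow from \S\ref{S:MatrixFiniteGroup} at all. Second, the irreducibility argument: Burnside's spanning theorem shows the $f^2$ entries of $\varrho(\underline{X})$ are linearly independent linear forms, the generic determinant is irreducible, and irreducibility is preserved both under a surjective linear change of coordinates and under adjunction of extra variables (your degree-count argument for the last point is the standard correct one). Each of these steps is sound.

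Two small points deserve attention. Your assertion that $F$ is a splitting field for $G$ because it contains the $n$-th roots of unity is Brauer's theorem --- true, but deep, and it should be cited as such; alternatively you can bypass it entirely, since the paper's own hypotheses in \S\ref{S:MatrixFiniteGroup} (each irreducible representation occurs in the regular representation with multiplicity equal to its degree, so $\sum_j f_j^2=n$) already force each $\varrho_j$ to be absolutely irreducible, which is all that Burnside's theorem requires. Also, the power-sum identification divides by the integers $k^{a_k}a_k!$, so it needs characteristic $0$ or larger than $f$; this caveat is inherited from the paper's own ``equivalent definition'' of $\Psi_\varrho$, which is in any case written over $\mathbf{C}$.
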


\section*{Acknowledgment}
The second author is thankful to the first author for a stay at Suda Neu-Tech Institute (Sanmenxia, Henan, China) where this joint work started, to the Abdus Salam School of Mathematical Science of Lahore where he pursued this work, to Jorge Jimenez Urroz for his help with Maple and to Claude Levesque for useful comments.

\bigskip\bigskip\bigskip

\noindent
{\sc Shigeru Kanemitsu}
\\
  Dept of Information Science,
\\
Faculty of Humanity-Oriented Science and Engineering
\\
Kinki University,
Iizuka,
FUKUOKA, 
820 8555 
JAPAN
\\
{\tt kanemitu@fuk.kindai.ac.jp}

\bigskip\bigskip\bigskip
\noindent
{\sc Michel~Waldschmidt} \\
{Universit\'e  Pierre et Marie Curie (Paris 6) }\\
{Institut de Math\'ematiques de Jussieu} \\
{4 Place Jussieu, 75252 PARIS Cedex 05, FRANCE} \\
{\tt miw@math.jussieu.fr}

\vfill
\vfill

\end{document}